\DeclareMathOperator{\supp}{supp}
\newtheorem{theorem}{Theorem}
\newtheorem{corollary}[theorem]{Corollary}
\newtheorem{lemma}[theorem]{Lemma}
\newenvironment{proof}[1][Proof]{\noindent\textbf{#1:} }{\hfill \rule{0.5em}{0.5em}}
\begin{document}

\title{A Generic Property of Exact Magnetic Lagrangians}
\author{Mário Jorge Dias Carneiro and Alexandre Rocha.}
\maketitle

\begin{abstract}
We prove that for the set of Exact Magnetic Lagrangians the property
\textquotedblleft There exist finitely many static classes for every
cohomology class" is generic. We also prove some dynamical consequences of this property.
\end{abstract}

\section{Introduction}

Let $M$ be a closed manifold equipped with an Riemannian metric $%
g=\left\langle .,.\right\rangle $. A Lagrangian $L:TM\rightarrow \mathbb{R}$
is called Exact Magnetic Lagrangian if
\begin{equation*}
L\left( x,v\right) =\frac{\left\Vert v\right\Vert ^{2}}{2}+\left\langle \eta
,v\right\rangle
\end{equation*}
for some non-closed 1-form $\eta $.

This type of Lagrangian fits into Mather's theory, as developed by R. Mañé
and A. Fathi, about Tonelli Lagrangians, namely, it is fiberwise convex and
superlinear. We refer the reader to the references Fathi in \cite{fa1},
Contreras and Iturriaga in \cite{gon2} for expositions of this theory.

Let $\mathfrak{M}\left( L\right) $ be the set of action minimizing measures.
Recall that $\mathfrak{M}\left( L\right) $ is the set of $\mu $ Borel
probability measures in $TM$ which are invariant under the Euler-Lagrange
flow $\varphi _{t}$ generated by $L$ and minimizes the \textit{action}, that
is for all invariant probability $\nu $ in $TM$:
\begin{equation*}
\int_{TM}Ld\mu \leq \int_{TM}Ld\nu .
\end{equation*}

The set $\mathfrak{M}\left( L\right) $ is a simplex whose extremal points
are the ergodic minimizing measures.


Since the Euler Lagrange flow generated by $L$ does not change by adding a
closed one form $\zeta $, we also consider the action minimizing measures $%
\mathfrak{M}\left( L-\zeta \right) $. The minimal action value, depends only
on the cohomology class $c=[\zeta ]\in H^{1}(M,{\mathbb{R}})$ of the closed
one form, so it is denoted by $-\alpha (c)$. It is known that $\alpha (c)$
is the energy level that contains the \textit{Mather set for the cohomology
class }$c$:%
\begin{equation*}
{\widetilde{\mathcal{M}}}_{c}=\bigcup_{\mu \in \mathfrak{M}(L-\zeta )}\supp%
(\mu ).
\end{equation*}%
${\widetilde{\mathcal{M}}}_{c}$ is a compact invariant set which is a graph
over a compact subset $\mathcal{M}_{c}$ of $M$, the projected Mather set
(see \cite{mat1}). $\mathcal{M}_{c}$ is laminated by curves, which are
global (or time independent) minimizers. Mather also proved that the
function $c\mapsto \alpha (c)$ is convex and superlinear.

In general, ${\widetilde{\mathcal{M}}}_{c}$ is contained in another compact
invariant set, which also a graph whose projection is laminated by global
minimizers: the \textit{Aubry set for the cohomology class c}, denoted by $%
\widetilde{\mathcal{A}}_{c}$. Mañé proved that $\widetilde{\mathcal{A}}_{c}$
is chain recurrent and it is a challenging question to describe the dynamics
of the Euler-Lagrange flow restricted to $\widetilde{\mathcal{A}}_{c}$. The
definition of Aubry set and some its properties are given in Section \ref%
{sec3}.

Of course this question only makes sense if it is posed for generic
Lagrangians, since many pathological examples can be constructed. The notion
of genericity in the context of Lagrangian systems is provided by Mañé in
\cite{man2}. The idea is to make special perturbations by adding a
potential: $L(x,v)+\Psi (x)$, for $\Psi \in C^{\infty }(M)$.

A property is \textit{generic} in the sense of Mañé if it is valid for all
Lagrangians $L(x,v)+ \Phi(x)$ with $\Phi$ contained in a residual subset $%
\mathcal{O}$.

In this setting, G. Contreras and P. Bernard proved in the work \textit{A
Generic Property of Families of Lagrangian Systems} (see \cite{ber1}) that
generically, in the sense of Mañé, for all cohomology class $c$ there is
only a finite number of minimizing measures. This theorem is a consequence
of an abstract result which is useful in different situations.

In general, when we are dealing with an specific class of Lagrangians,
perturbations by adding a potential are not allowed. However, due to the
abstract nature of Bernard-Contreras proof it may be addapted to the
specific case like the one treated here.

The objective of this paper is to prove the genericity of finitely many
minimizing measures for Exact Magnetic Lagrangians and apply it to the
dynamics of the Aubry set.

Let us consider $\Gamma ^{1}\left( M\right) $ the set of smooth 1-forms in $%
M\ $endowed with the metric%
\begin{equation}
d\left( \omega _{1},\omega _{2}\right) =\sum_{k\in \mathbb{N}}\frac{\arctan
\left( \left\Vert \omega _{1}-\omega _{2}\right\Vert _{k}\right) }{2^{k}},
\label{d1}
\end{equation}%
denoting by $\left\Vert \omega \right\Vert _{k}$ the $C^{k}$-norm of the
1-form $\omega .$ With this metric $\Gamma ^{1}\left( M\right) $ is a
Frechet space, it means that $\Gamma ^{1}\left( M\right) $ is a locally
convex topological vector space whose topology is defined by a
translation-invariant metric, and that $\Gamma ^{1}\left( M\right) $ is
complete for this metric. 

The main result of this paper is the following:

\begin{theorem}
\label{teo1}Let $A$ be a finite dimensional convex family of Exact Magnetic
Lagrangians. Then there exists a residual subset $\mathcal{O}$ of $\Gamma
^{1}\left( M\right) $ such that,%
\begin{equation*}
\omega \in \mathcal{O},L\in A\Rightarrow \dim \mathfrak{M}\left( L+\omega
\right) \leq \dim A.
\end{equation*}%
Hence there exist at most $1+\dim A$ ergodic minimizing measures of $%
L+\omega .$
\end{theorem}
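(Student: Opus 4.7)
The approach is to adapt the abstract genericity theorem of Bernard and Contreras (see \cite{ber1}) to the present setting, where the perturbation space is $\Gamma^{1}(M)$ rather than $C^{\infty}(M)$. The whole strategy rests on the linearity of the action integral in $\omega$: for every invariant probability measure $\mu$, the map $\omega \mapsto \int_{TM}(L+\omega)\,d\mu$ is affine, so the minimal-action functional $F_{L}(\omega):=\inf_{\mu}\int_{TM}(L+\omega)\,d\mu$ is a concave, upper-semicontinuous function on $\Gamma^{1}(M)$, being the pointwise infimum of a family of continuous affine maps.

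The first step is to establish the convex-analytic dictionary. The superdifferential of $F_{L}$ at $\omega$ is canonically identified with the image of the simplex $\mathfrak{M}(L+\omega)$ under the linearization $\Lambda(\mu):\omega' \mapsto \int_{TM}\omega'\,d\mu$, seen as a map from invariant measures to the dual of $\Gamma^{1}(M)$. Using Mather's graph theorem, every $\mu\in\mathfrak{M}(L+\omega)$ is concentrated on a Lipschitz graph over its projected Mather set; this is used to show that $\Lambda$ is injective in restriction to $\mathfrak{M}(L+\omega)$, hence $\dim \mathfrak{M}(L+\omega)=\dim \Lambda(\mathfrak{M}(L+\omega))$.

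The second step is to apply the abstract Bernard-Contreras theorem to the finite-dimensional family of concave functionals $\{F_{L}\}_{L\in A}$. Fixing a basis $L_{0},\dots,L_{k}$ of $A$ with $k=\dim A$, the abstract statement furnishes a residual subset $\mathcal{O}\subset\Gamma^{1}(M)$ on which the superdifferential of $F_{L}$ has dimension at most $\dim A$ simultaneously for every $L\in A$. Combined with the injectivity of $\Lambda$, this yields $\dim \mathfrak{M}(L+\omega) \leq \dim A$. Since a simplex of dimension $d$ has at most $d+1$ extreme points and the extreme points of $\mathfrak{M}(L+\omega)$ are precisely the ergodic minimizing measures, the bound of $1+\dim A$ ergodic measures follows.

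The main obstacle will be twofold. On the technical side, the Bernard-Contreras argument is usually phrased in a Banach setting, whereas $\Gamma^{1}(M)$ is only a Fr\'echet space endowed with the translation-invariant metric (\ref{d1}); one must either run the argument directly using Baire category in this Fr\'echet space or reduce to Banach subproblems by fixing a $C^{k}$-regularity and then intersecting countably many residual sets. On the conceptual side, one has to verify that the semicontinuity and compactness hypotheses needed by the abstract theorem survive the replacement of potentials by $1$-forms: the key points are upper semicontinuity of $(L,\omega)\mapsto\mathfrak{M}(L+\omega)$ in the weak-$*$ topology and the injectivity of $\Lambda$ on the minimizing simplex, both of which ultimately rely on the graph property of minimizers.
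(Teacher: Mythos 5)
Your overall route is the same as the paper's: realize $\omega\mapsto \int (L+\omega)\,d\mu$ as a family of affine functionals on a compact convex set of measures, apply the abstract Bernard--Contreras theorem with $E=\Gamma^{1}(M)$ as the (Fr\'echet) perturbation space, and transfer the dimension bound back to $\mathfrak{M}(L+\omega)$ via injectivity of the linearization. However, there is a genuine gap at the step you dispatch in one line: the claim that the graph property alone makes $\Lambda$ (equivalently, the restriction map $\pi:\mathfrak{M}(L+\omega)\to\Gamma^{0}(M)^{\ast}$) injective on the minimizing simplex. The test functionals available here are of the form $(x,v)\mapsto\langle X(x),v\rangle$, and these all vanish identically on the zero section; so two distinct invariant measures carried by a Lipschitz graph could still fail to be separated by $1$-forms if their supports met $\{v=0\}$. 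This is exactly where the \emph{exact magnetic} hypothesis must enter: since $\eta$ is non-closed, $\alpha(0)>0$ (Paternain--Paternain), the Mather set lies in the energy level $\|v\|^{2}=2\alpha(0)$, and the paper separates $\mu\neq\nu$ by taking $X$ to be (an extension of) the graph section itself, so that $\int_{A}\langle X,v\rangle\,d\mu=2\alpha(0)\,\mu(A)$ and one can divide by $\alpha(0)\neq 0$. Without this input your injectivity claim is unproved, and it is the one point of the argument that is not already in Bernard--Contreras.

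A secondary imprecision: you define $F_{L}(\omega)=\inf_{\mu}\int(L+\omega)\,d\mu$ with $\mu$ ranging over invariant measures and call it an infimum of a \emph{fixed} family of continuous affine maps, but the set of invariant measures of the Euler--Lagrange flow of $L+\omega$ depends on $\omega$. To get a genuine fixed compact convex domain (as hypothesis (iv) of the abstract theorem requires) you must pass to Ma\~n\'e's holonomic measures supported in the balls $B_{N}$, use that minimizing invariant measures are exactly the minimizers of the action over holonomic measures, and then intersect the resulting residual sets over $N$. You flag compactness and the Fr\'echet issue as obstacles, which is fair, but the holonomic-measure substitution is needed even to make your concavity statement correct, and the metrizability and $E$-separation of $K_{N}$ (via separability of $\Gamma^{0}(M)$ and smoothing of continuous vector fields) also have to be checked.
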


\begin{corollary}
Let $L$ be a Exact Magnetic Lagrangian. Then there exists a residual subset $%
\mathcal{O}$ of $\Gamma ^{1}\left( M\right) $ such that for all $c\in
H^{1}\left( M,\mathbb{R}\right) \ $and for all $\omega \in \mathcal{O},$
there are at most $1+\dim H^{1}\left( M,\mathbb{R}\right) $ ergodic
minimizing measures of $L+\omega -c.$
\end{corollary}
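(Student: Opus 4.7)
The strategy is to reduce the corollary to a single application of Theorem \ref{teo1} by packaging all cohomology classes into one finite dimensional convex family of Exact Magnetic Lagrangians. The key observations are that $H^{1}(M,\mathbb{R})$ is finite dimensional and that the set of minimizing measures $\mathfrak{M}(L+\omega-\zeta)$ depends only on the cohomology class $[\zeta]$, so a single family should witness the bound uniformly in $c$.

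First I would fix a finite dimensional linear subspace $V\subset \Gamma^{1}(M)$ of closed $1$-forms such that the de Rham map $V\to H^{1}(M,\mathbb{R})$, $\zeta\mapsto [\zeta]$, is a linear isomorphism; for instance $V$ can be taken to be the space of $g$-harmonic $1$-forms. Then I would set
\begin{equation*}
A=\{\,L-\zeta : \zeta\in V\,\},
\end{equation*}
which is a convex affine subspace of dimension $\dim H^{1}(M,\mathbb{R})$. To see that each $L-\zeta$ is still an Exact Magnetic Lagrangian, write $L(x,v)=\tfrac{1}{2}\|v\|^{2}+\langle\eta,v\rangle$ with $d\eta\neq 0$; then $L-\zeta=\tfrac{1}{2}\|v\|^{2}+\langle\eta-\zeta,v\rangle$ and $d(\eta-\zeta)=d\eta\neq 0$ because $\zeta$ is closed. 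Applying Theorem \ref{teo1} to $A$ produces a residual set $\mathcal{O}\subset \Gamma^{1}(M)$ such that for every $\omega\in\mathcal{O}$ and every $\zeta\in V$, the Lagrangian $L+\omega-\zeta$ has at most $1+\dim H^{1}(M,\mathbb{R})$ ergodic minimizing measures.

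To finish, given any class $c\in H^{1}(M,\mathbb{R})$, let $\zeta_{c}\in V$ be the unique representative of $c$ inside $V$, and let $\zeta$ be any other closed $1$-form with $[\zeta]=c$. Then $\zeta-\zeta_{c}=df$ is exact, and since $\int df(v)\,d\mu=0$ for every $\varphi_{t}$-invariant probability $\mu$, the action functionals of $L+\omega-\zeta$ and $L+\omega-\zeta_{c}$ coincide on invariant measures. Therefore $\mathfrak{M}(L+\omega-\zeta)=\mathfrak{M}(L+\omega-\zeta_{c})$, and the bound transfers to the full cohomology class $c$, as required. I do not expect any substantial obstacle: Theorem \ref{teo1} already does the heavy lifting, and the only care needed is in selecting a single finite dimensional slice $V\subset \Gamma^{1}(M)$ that simultaneously represents every cohomology class and in checking that the Exact Magnetic form of the Lagrangian is preserved under subtraction of a closed $1$-form.
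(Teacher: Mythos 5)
Your proof is correct and is essentially the argument the paper intends (the corollary is stated without an explicit proof, as an immediate consequence of Theorem \ref{teo1}): one applies the theorem to the affine family $A=\{L-\zeta:\zeta\in V\}$ with $V$ a finite dimensional space of closed $1$-forms representing $H^{1}(M,\mathbb{R})$, notes that subtracting a closed form preserves the Exact Magnetic form since $d(\eta-\zeta)=d\eta\neq 0$, and uses that $\mathfrak{M}(L+\omega-\zeta)$ depends only on $[\zeta]$. Your verification of these points, including the invariance of the minimizing measures under change of representative via $\int df\,d\mu=0$, is complete and needs no correction.
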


The last part of this work is dedicated to prove some consequences about the
dynamics. For instance, using the work of Contreras and Paternain, \cite%
{gon1} we obtain connecting orbits between the elements of the Aubry set
that contain the support of minimizing measures (the so called
\textquotedblleft static classes").


%
%
%
%
%
%


\section{Adapting the abstract setting of Bernard and Contreras}

As it was pointed out previously, the proof of Theorem \ref{teo1} is an
application of the work of Contreras and Bernard. Here we state their result.

Assume that we are given

\begin{description}
\item[(i)] Three topological vector spaces $E,F,G.$

\item[(ii)] A continuous linear map $\pi :F\rightarrow G.$

\item[(iii)] A bilinear map $\left\langle ,\right\rangle :E\times
G\rightarrow
\mathbb{R}
$.

\item[(iv)] Two metrizable convex compact subsets $H\subset F$ and $K\subset
G$ such that $\pi \left( H\right) \subset K.$
\end{description}

Suppose that

\begin{enumerate}
\item The restriction of the map given by (iii), $\left\langle
,\right\rangle |_{E\times K}$ is continuous.

\item The compact $K$ is separated by $E.$ This means that, if $\mu $ and $%
\nu $ are two different points of $K,$ then there exists a point $\omega $
in $E$ such that $\left\langle \omega ,\mu -\nu \right\rangle \neq 0.$

\item $E$ is a Frechet space.
\end{enumerate}

Note then that $E$ has the Baire property, that is any residual subset of $E$
is dense.

We shall denote by $H^{\ast }$ the set of affine and continuous functions
defined on $H.$ Given $\bar{L}\in H^{\ast }$ denote by%
\begin{equation*}
M_{H}\left( \bar{L}\right) =\arg \min \bar{L}
\end{equation*}%
the set of points $\alpha \in H$ which minimizes $\bar{L}|_{H},$ and by $%
M_{K}\left( \bar{L}\right) $ the image $\pi \left( M_{H}\left( \bar{L}%
\right) \right) .$ These are compacts convex subsets of $H$ and $K.$

Under these conditions we have:

\begin{theorem}[G. Contreras and P. Bernard]
\label{teo2}For every finite dimensional affine subspace $A$ of $H^{\ast }$,
there exists a residual subset $\mathcal{O}\left( A\right) \subset E$ such
that, for all $\omega \in \mathcal{O}\left( A\right) $ and $\bar{L}\in A,$
we have%
\begin{equation*}
\dim M_{K}\left( \bar{L}+\omega \right) \leq \dim A
\end{equation*}
\end{theorem}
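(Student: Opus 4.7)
The plan is to reduce the statement to a uniform dimension bound on a partial superdifferential of a parametrized concave function on the Frechet space $E$, and then to invoke a generic differentiability result.

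First, I would fix a base point $L_0 \in A$ and a basis $e_1, \ldots, e_d$ of the linear part $A - L_0$, where $d = \dim A$, and introduce the value function
\[
\Phi : E \times \mathbb{R}^d \to \mathbb{R}, \qquad \Phi(\omega, t) = \min_{\alpha \in H}\Bigl(L_0(\alpha) + \sum_{i=1}^{d} t_i\, e_i(\alpha) + \langle \omega, \pi(\alpha)\rangle\Bigr).
\]
Joint continuity of $\Phi$ follows from hypotheses (iv) and (1) combined with the compactness of $H$, and $\Phi$ is jointly concave as an infimum of jointly affine functions of $(\omega, t)$. Writing $\bar{L}_t := L_0 + \sum_i t_i e_i \in A$, a standard computation using the separation of $K$ by $E$ would identify the partial superdifferential in $\omega$ with the minimizer set,
\[
\partial^+_\omega \Phi(\omega, t) = M_K(\bar{L}_t + \omega),
\]
so that the theorem reduces to showing that, for a residual set of $\omega \in E$, this partial superdifferential has dimension at most $d$ for every $t \in \mathbb{R}^d$.

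The main step is then the following abstract lemma: for a jointly concave continuous function $\Phi$ on $E \times \mathbb{R}^d$, there is a residual subset $\mathcal{O}(A) \subset E$ on which $\dim \partial^+_\omega \Phi(\omega, t) \leq d$ for every $t$. For each element of a countable dense sequence $(t_k) \subset \mathbb{R}^d$, a Mazur-type generic differentiability result for concave continuous functions on a Frechet space produces a dense $G_\delta$ subset $\mathcal{O}_k \subset E$ where $\Phi(\cdot, t_k)$ is Gateaux differentiable, so that $\partial^+_\omega \Phi(\omega, t_k)$ is a singleton. I would take $\mathcal{O}(A) = \bigcap_k \mathcal{O}_k$, which is itself residual because $E$ has the Baire property. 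The upgrade from singleton superdifferentials at the countably many $t_k$ to the uniform bound $d$ at arbitrary $t$ uses joint concavity: the $d$-dimensional freedom in $t$ can contribute at most $d$ extra directions to the $\omega$-superdifferential when passing from $t_k$ to a nearby $t$, via upper semicontinuity of the superdifferential multifunction.

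The main obstacle I anticipate is precisely this upgrade, since the naive intersection of differentiability sets over all $t \in \mathbb{R}^d$ is uncountable and need not be residual. The argument must combine the joint concavity of $\Phi$ with upper semicontinuity of $t \mapsto \partial^+_\omega \Phi(\omega, t)$ and a finite-dimensional slicing to control, by exactly $d$, the discrepancy between partial superdifferentials at nearby parameters. Once this abstract lemma is in hand, the conclusion is immediate: for every $\omega \in \mathcal{O}(A)$ and every $\bar{L} = L_0 + \sum_i t_i e_i \in A$, the identification above yields $\dim M_K(\bar{L} + \omega) = \dim \partial^+_\omega \Phi(\omega, t) \leq d = \dim A$.
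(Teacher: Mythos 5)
First, note that the paper itself does not prove Theorem \ref{teo2}: it is quoted from Bernard--Contreras \cite{ber1} and used as a black box, so your attempt has to be measured against their original argument. Your reduction is faithful to it: the value function $\Phi(\omega,t)=\min_{H}\bigl(\bar L_{t}+\langle\omega,\pi(\cdot)\rangle\bigr)$, its joint concavity, and the identification of $M_{K}(\bar L_{t}+\omega)$ with the partial superdifferential $\partial^{+}_{\omega}\Phi(\omega,t)$ (via the separation hypothesis) are the right first steps. The genuine gap is exactly the step you flag as the main obstacle: the upgrade from Gateaux differentiability at a countable dense set of parameters $t_{k}$ to the bound $\dim\partial^{+}_{\omega}\Phi(\omega,t)\le d$ for \emph{all} $t$. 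Upper semicontinuity of the superdifferential goes the wrong way for this purpose: it gives $\limsup_{t'\to t}\partial^{+}_{\omega}\Phi(\omega,t')\subset\partial^{+}_{\omega}\Phi(\omega,t)$, so singletons at nearby parameters impose no upper bound on the superdifferential at $t$. Concretely, take $E=\mathbb{R}^{2}$, $d=1$, $\Phi(\omega,t)=-\max(|\omega|,|t|)$ and $\omega_{0}=0$: for every $t\neq 0$ the map $\omega\mapsto\Phi(\omega,t)$ is locally constant near $0$, so $\partial^{+}_{\omega}\Phi(0,t)=\{0\}$ is a singleton, yet $\partial^{+}_{\omega}\Phi(0,0)$ is the full closed unit ball, of dimension $2>d$. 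If your dense sequence $(t_{k})$ avoids $t=0$, then $\omega_{0}=0$ lies in $\bigcap_{k}\mathcal{O}_{k}$ while the conclusion fails there; so the residual set you construct need not be contained in the good set, and the heuristic that joint concavity ``contributes at most $d$ extra directions'' is precisely what this example violates.

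What the actual proof uses instead is a quantitative smallness statement for the set of points with large subdifferential, not a differentiability statement applied slice by slice. After reducing to finite dimensions (using that $K$ is metrizable and separated by a countable family of elements of $E$), one invokes a Zaj\'{\i}\v{c}ek-type covering theorem: for a convex function on $\mathbb{R}^{d}\times\mathbb{R}^{n}$, the set where the full subdifferential has dimension at least $d+1$ is covered by countably many Lipschitz graphs of codimension $d+1$. Since $\dim\partial^{+}_{\omega}\Phi(\omega,t)\ge d+1$ forces $\dim\partial^{+}\Phi(\omega,t)\ge d+1$, projecting these codimension-$(d+1)$ sets along the $d$-dimensional $t$-variable still leaves a set of $\omega$ of codimension at least one, hence meager; its complement is the residual set $\mathcal{O}(A)$, valid simultaneously for all $t$. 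This covering-plus-projection mechanism is the idea missing from your proposal; Mazur's theorem at countably many parameters cannot substitute for it.
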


In order to apply this theorem, we need to define the above objects in an
adequate setting as follows:

Let $C$ be the set of continuous functions $f:TM\rightarrow
\mathbb{R}
$ with linear growth, that is%
\begin{equation}
\left\Vert f\right\Vert _{\ell in}=\sup_{\theta \in TM}\frac{\left\vert
f\left( \theta \right) \right\vert }{1+\left\vert \theta \right\vert }%
<+\infty  \label{d2}
\end{equation}%
endowed with the norm $\left\Vert .\right\Vert _{\ell in}.$

We define:

\begin{list}{\textbf{$\bullet$}}{\usecounter{quest}
\setlength{\labelwidth}{-2mm} \setlength{\parsep}{0mm}
\setlength{\topsep}{0mm} \setlength{\leftmargin}{0mm}}
\renewcommand{\labelenumi}{(\alph{enumi})}

\item $E=\Gamma ^{1}\left( M\right) $ endowed with the metric $d$ defined in
(\ref{d1}).

\item $F=C^{\ast }$ is the vector space of continuous linear functionals $%
\mu :C\rightarrow
\mathbb{R}
$ provided with the weak-$\star $ topology:%
\begin{equation*}
\lim_{n}\mu _{n}=\mu \Leftrightarrow \lim_{n}\mu _{n}\left( f\right) =\mu
\left( f\right) ,\forall f\in C.
\end{equation*}

\item $G$ is the vector space of continuous linear functionals $\mu :\Gamma
^{0}\left( M\right) \rightarrow
\mathbb{R}
,$ where $\Gamma ^{0}\left( M\right) $ is the space of continuous 1-forms on
$M$. 
%
Note that the Riemannian metric $g=\left\langle .,.\right\rangle $ allows us
to represent any continuous 1-form as $\left\langle X,.\right\rangle ,$ for
some $C^{0}$ vector field $X.$ We endow $G$ with the weak-$\star $ topology:%
\begin{equation*}
\lim_{n}\mu _{n}=\mu \Leftrightarrow \lim_{n}\mu _{n}\left( \omega \right)
=\mu \left( \omega \right) ,\forall \omega \in \Gamma ^{0}\left( M\right) .
\end{equation*}

\item The continuous linear $\pi :F\rightarrow G$ is given by%
\begin{equation*}
\pi \left( \mu \right) =\mu |_{\Gamma ^{0}\left( M\right) }.
\end{equation*}

\item For a given natural number $N$, let
\begin{equation*}
B_{N}=\left\{ \left( x,v\right) \in TM:\left\vert v\right\vert \leq
N\right\} .
\end{equation*}

Let us denote by $M_{N}^{1}$ the set of the probability measures $\mu $ in $%
TM$ such that $\supp\mu \subset B_{N}.$ Define $K_{N}=\pi \left(
M_{N}^{1}\right) \subset G$, the restriction of the probabilities in $%
M_{N}^{1}$ to $\Gamma ^{0}\left( M\right) $.
\end{list}

\noindent \textbf{Claim 1. }$K_{N}$ is metrizable.

\begin{proof}
Since $G$ is the dual of $\Gamma ^{0}\left( M\right) $, we define a norm in $%
G$ as follows%
\begin{equation*}
\left\Vert \mu \right\Vert _{G}=\sup_{\left\Vert \omega \right\Vert _{\ell
in}\leq 1}\left\{ \left\vert \mu \left( \omega \right) \right\vert \right\} .
\end{equation*}%
If $\mu \in K_{N},$%
\begin{eqnarray*}
\left\Vert \mu \right\Vert _{\emph{G}} &=&\sup_{\left\Vert \omega
\right\Vert _{\ell in}\leq 1}\left\{ \left\vert \int_{TM}\omega d\mu
\right\vert \right\} \leq \sup_{\left\Vert \omega \right\Vert _{\ell in}\leq
1}\left\{ \int_{TM\cap B_{N}}\left\vert \omega \right\vert d\mu \right\} \\
&=&\sup_{\left\Vert \omega \right\Vert _{\ell in}\leq 1}\left\{ \int_{B_{N}}%
\frac{\left\vert \omega \left( x,v\right) \right\vert }{1+N}\left(
1+N\right) d\mu \right\} \leq \sup_{\left\Vert \omega \right\Vert _{\ell
in}\leq 1}\left\{ \int_{B_{N}}\frac{\left\vert \omega \left( x,v\right)
\right\vert }{1+\left\vert v\right\vert }\left( 1+N\right) d\mu \right\} \\
&\leq &\left( N+1\right) \sup_{\left\Vert \omega \right\Vert _{\ell in}\leq
1}\left\{ \int_{TM}\left\Vert \omega \right\Vert _{\ell in}d\mu \right\}
\leq N+1.
\end{eqnarray*}%
This shows that $K_{N}\subset B_{G},$ where $B_{G}$ is the ball of radius $%
N+1$ in $G=\Gamma ^{0}\left( M\right) ^{\ast }.$ Then, by following
classical theorem of Analysis, it is enough show that $\Gamma ^{0}\left(
M\right) $ is a separable vector space.

\begin{theorem}
\label{teoa1}Let $E$ a Banach's space. Then $E$ is separable if, and only
if, the unit ball $B_{E^{\ast }}\subset E^{\ast }$ in the weak-$\star $
topology is metrizable.
\end{theorem}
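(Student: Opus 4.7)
The plan is to prove each direction separately using classical functional-analytic tools, namely Banach-Alaoglu on one side and Hahn-Banach on the other.

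For the forward direction, suppose $E$ is separable and fix a countable dense sequence $\{x_{n}\}_{n\in\mathbb{N}}$ in the unit ball $B_{E}$. I would define a candidate metric on $B_{E^{\ast}}$ by
\[
d(f,g) = \sum_{n=1}^{\infty} \frac{1}{2^{n}}|f(x_{n}) - g(x_{n})|,
\]
which converges since $|f(x_{n})-g(x_{n})| \leq 2$ for $f,g \in B_{E^{\ast}}$. It is straightforward that $d$ is a pseudometric; density of $\{x_{n}\}$ in $B_{E}$ together with continuity of functionals promotes it to a genuine metric. The task then reduces to proving that the identity map from $(B_{E^{\ast}}, \text{weak-}\star)$ to $(B_{E^{\ast}}, d)$ is a homeomorphism. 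Continuity from the weak-$\star$ topology to $d$ follows by truncating the series at a large index and bounding the tail by the geometric series. The reverse continuity is then free of charge: by Banach-Alaoglu, $B_{E^{\ast}}$ is weak-$\star$ compact, $(B_{E^{\ast}}, d)$ is Hausdorff, so any continuous bijection between them is automatically a homeomorphism.

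For the reverse direction, suppose $B_{E^{\ast}}$ is metrizable in the weak-$\star$ topology by some metric $d$. Then the family of balls $U_{m} = \{f \in B_{E^{\ast}} : d(f,0) < 1/m\}$ forms a countable neighborhood base at $0$. Since each $U_{m}$ is weak-$\star$ open, it must contain a basic weak-$\star$ neighborhood of the form $V(x_{1}^{m},\ldots,x_{k_{m}}^{m};\varepsilon_{m}) = \{f \in B_{E^{\ast}} : |f(x_{i}^{m})| < \varepsilon_{m},\ i=1,\ldots,k_{m}\}$. Let $D$ denote the countable collection $\{x_{i}^{m} : m\in\mathbb{N},\ 1 \leq i \leq k_{m}\}$. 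I claim that the closed linear span of $D$ equals $E$: otherwise, Hahn-Banach furnishes a nonzero $f \in B_{E^{\ast}}$ vanishing on $D$, hence lying in every $V(x_{1}^{m},\ldots,x_{k_{m}}^{m};\varepsilon_{m})$ and therefore in every $U_{m}$, forcing $d(f,0)=0$ and $f=0$, a contradiction. Rational linear combinations of elements of $D$ then constitute a countable dense subset of $E$.

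The main technical burden lies in the forward direction, specifically in verifying that the explicit series metric reproduces the weak-$\star$ topology on the entire ball rather than merely separating its points. Rather than inverting explicit $\varepsilon$-$\delta$ estimates on both sides, I would exploit Banach-Alaoglu together with the elementary topological fact that a continuous bijection from a compact space to a Hausdorff space is a homeomorphism; this is what makes the argument compact. The backward direction is more conceptual and hinges on Hahn-Banach to translate the countable neighborhood base at $0$ into a countable spanning set in $E$.
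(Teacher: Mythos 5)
Your proof is correct and complete; both directions are the standard classical argument (the series metric plus Banach--Alaoglu and the compact-to-Hausdorff trick for the forward implication, and Hahn--Banach applied to the countable neighborhood base for the converse). Note that the paper itself offers no proof of this statement --- it is invoked as a ``classical theorem of Analysis'' --- so there is nothing to compare against; your argument is precisely the textbook proof being alluded to.
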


The separability of $\Gamma ^{0}\left( M\right) $ follows from the lemma
below and of the duality between 1-forms and vector fields provided by the
Riemannian metric.

\begin{lemma}
\label{lemmaa1}The space $\mathfrak{X}^{0}\left( M\right) $ of continuous
vector fields in a compact manifold $M$ is separable.
\end{lemma}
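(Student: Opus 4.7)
The plan is to realize $\mathfrak{X}^{0}(M)$ as a closed subspace of a separable Banach space, and then invoke the fact that subspaces of separable metric spaces are separable. The natural candidate for the ambient space is $C(M,\mathbb{R}^{N})$ for some $N$, obtained from a global embedding of $M$.

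First, by the Whitney embedding theorem I would embed $M$ smoothly as a submanifold of some Euclidean space $\mathbb{R}^{N}$. Through this embedding each tangent space $T_{p}M$ is identified with a linear subspace of $\mathbb{R}^{N}$, and any continuous vector field $X$ becomes a continuous map $\widetilde{X}\colon M\to \mathbb{R}^{N}$ satisfying the pointwise constraint $\widetilde{X}(p)\in T_{p}M$. This constraint is preserved under uniform convergence, so $\mathfrak{X}^{0}(M)$ sits as a closed linear subspace of $C(M,\mathbb{R}^{N})$, with sup norm equivalent to the one coming from $g$: by compactness of $M$, any two continuous fiberwise inner products on $TM$ yield equivalent norms.

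Second, I would establish the separability of $C(M,\mathbb{R}^{N})$. Since $M$ is a compact metric space, the Stone--Weierstrass theorem applies: the unital subalgebra generated by the restrictions to $M$ of the ambient coordinate functions (together with the constants) separates points of $M$, so its closure is all of $C(M,\mathbb{R})$; polynomials with rational coefficients in these generators therefore furnish a countable dense set. Consequently $C(M,\mathbb{R}^{N})\cong C(M,\mathbb{R})^{N}$ is separable, and so is its closed subspace $\mathfrak{X}^{0}(M)$.

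There is no genuine obstacle here; the only subtlety worth flagging is the identification of norms, ensuring that the topology in which the paper uses $\mathfrak{X}^{0}(M)$ (where via duality $\left\Vert \omega\right\Vert_{\ell in}$ on a 1-form coincides with the sup of the dual Riemannian norm of the corresponding vector field) agrees, up to equivalence, with the sup norm inherited from the Euclidean embedding. An entirely parallel argument via a finite atlas with a subordinate partition of unity, expressing each vector field locally as an $n$-tuple of continuous scalar functions on $M$, would also work and avoids any appeal to the Whitney theorem.
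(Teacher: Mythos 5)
Your proof is correct, but it takes a genuinely different route from the paper's. The paper works directly with a finite atlas of local trivializations $\hat{U}_i \subset TM \to U_i \times \mathbb{R}^n$ and a partition of unity $\{\alpha_i\}$ subordinate to $\{U_i\}$: choosing a countable dense subset $\{f_n^i\}$ of each $C^0\left(\overline{U_i},\mathbb{R}^n\right)$, it exhibits the explicit countable dense family $\left\{\sum_i \alpha_i f_{n_i}^i\right\}$ in $\mathfrak{X}^0(M)$ and verifies density by a direct $\epsilon/2^i$ estimate. You instead embed $M$ in $\mathbb{R}^N$ via Whitney, realize $\mathfrak{X}^0(M)$ as a closed subspace of $C(M,\mathbb{R}^N)$ (closedness following from the pointwise constraint $\widetilde{X}(p)\in T_pM$ being preserved under uniform limits), get separability of the ambient space from Stone--Weierstrass, and conclude by heredity of separability in metric spaces. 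Both arguments are sound; yours is shorter modulo the standard facts it invokes (Whitney embedding, Stone--Weierstrass, hereditary separability), while the paper's is more elementary and self-contained, producing the dense set constructively without any global embedding. Your remark about the equivalence of the sup norm from the Euclidean embedding with the one induced by $g$ is the right point to flag, and your closing observation that a finite-atlas-plus-partition-of-unity argument would also work describes exactly the proof the paper actually gives.
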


\begin{proof}
By compactness of $M,$ we can consider a number finite local trivializations
$\hat{U}_{i}\subset TM\rightarrow U_{i}\times
\mathbb{R}
^{n}$ of the tangent bundle $TM$ and by compactness of $\overline{U_{i}},$ $%
\mathfrak{X}^{0}\left( \overline{U_{i}}\right) =C^{0}\left( \overline{U_{i}},%
\mathbb{R}
^{n}\right) $ is separable. Let $\left\{ f_{n}^{i}\right\} $ be a dense
subset in $\mathfrak{X}^{0}\left( \overline{U_{i}}\right) $ and $\left\{
\alpha _{i}\right\} $ a partition of unity subordinate to the open cover $%
\left\{ U_{i}\right\} .$ It is enough show that $\left\{ \sum_{i}\alpha
_{i}f_{n}^{i}\right\} $ is dense in $\mathfrak{X}^{0}\left( M\right) .$ Let $%
g\in \mathfrak{X}^{0}\left( M\right) $ and consider $g_{i}=\alpha _{i}g$.
Then $g=\sum \alpha _{i}g=\sum g_{i},$ $\supp g_{i}\subset U_{i}\subset
\overline{U_{i}}.$ Given $\epsilon >0$ there exists $n_{i}\in
\mathbb{N}
$ such that%
\begin{equation*}
\left\Vert f_{n_{i}}^{i}-g_{i}\right\Vert <\frac{\epsilon }{2^{i}}.
\end{equation*}%
Then%
\begin{eqnarray*}
\left\Vert \sum_{i}\alpha _{i}f_{n_{i}}^{i}-g\right\Vert &=&\left\Vert
\sum_{i}\alpha _{i}f_{n_{i}}^{i}-\sum_{i}\alpha _{i}g\right\Vert \leq
\sum_{i}\sup_{\overline{U_{i}}}\left\vert f_{n_{i}}^{i}-g_{i}\right\vert \\
&=&\sum_{i}\left\Vert f_{n_{i}}^{i}-g_{i}\right\Vert <\sum_{i}\frac{\epsilon
}{2^{i}}<\epsilon .
\end{eqnarray*}
\end{proof}

Let us consider $\left( X_{n}\right) $ a dense sequence in $\mathfrak{X}^{0}$%
$\left( M\right) $ and $\omega _{n}=\left\langle X_{n},\cdot \right\rangle
\in \Gamma ^{0}\left( M\right) .$ Let $\omega =\left\langle X,\cdot
\right\rangle \in \Gamma ^{0}\left( M\right) $ and $\mathcal{U}_{\varepsilon
}$ be a ball in $\Gamma ^{0}\left( M\right) $ centered at $\omega .$, of
radius $\varepsilon >0$. Then there exists a $X_{n}\in V_{\varepsilon
}\left( X\right) ,$ where $V_{\varepsilon }\left( X\right) $ is the ball in $%
\mathfrak{X}^{0}\left( M\right) $ of radius $\varepsilon $ and center $X.$
It follows that%
\begin{eqnarray*}
\left\Vert \omega _{n}-\omega \right\Vert _{\ell in} &=&\sup_{\left(
x,v\right) \in TM}\frac{\left\vert \left( \omega _{n}-\omega \right) \left(
x,v\right) \right\vert }{1+\left\vert v\right\vert }=\sup_{\left( x,v\right)
\in TM}\frac{\left\vert \left\langle \left( X_{n}-X\right) \left( x\right)
,v\right\rangle \right\vert }{1+\left\vert v\right\vert } \\
&\leq &\sup_{\left( x,v\right) \in TM}\frac{\left\vert \left( X_{n}-X\right)
\left( x\right) \right\vert \left\vert v\right\vert }{1+\left\vert
v\right\vert }\leq \sup_{x\in M}\left\vert \left( X_{n}-X\right) \left(
x\right) \right\vert <\varepsilon .
\end{eqnarray*}%
This shows that $\omega _{n}\in $ $\mathcal{U}_{\varepsilon }$ and $\Gamma
^{0}\left( M\right) $ is separable, so $K_{N}$ is metrizable. This finishes
the proof of the Claim 1.
\end{proof}

Observe that $K_{N}$ is compact and convex since $K_{N}=\pi \left(
M_{N}^{1}\right) ,$ $\pi $ is a continous map and $M_{N}^{1}$ is a compact
subset of probability measures in $TM.$

\begin{list}{\textbf{$\bullet$}}{\usecounter{quest}
\setlength{\labelwidth}{-2mm} \setlength{\parsep}{0mm}
\setlength{\topsep}{0mm} \setlength{\leftmargin}{0mm}}
\renewcommand{\labelenumi}{(\alph{enumi})}

\item \noindent \noindent The bilinear mapping $\left\langle ,\right\rangle
:E\times G\rightarrow
\mathbb{R}
$ is given by integration:%
\begin{equation*}
\left\langle \omega ,\mu \right\rangle =\int_{TM}\omega d\mu .
\end{equation*}%
Note that here we apply the Hahn-Banach Theorem for extends the functional $%
\mu $ and that the above integral does not depend on the extension of $\mu $
to a signed measure on $TM$ given by Riesz representation Theorem. Moreover,
\begin{equation*}
\left\langle ,\right\rangle :E\times K_{N}\rightarrow
\mathbb{R}
\end{equation*}%
is continuous. In fact, if $\omega _{n}\rightarrow \omega $ and $\mu
_{n}\rightarrow \mu $ with $\left( \omega _{n}\right) \subset E$ and $\left(
\mu _{n}\right) \subset K_{N},$ then%
\begin{equation*}
\lim_{n}\int_{TM}\eta d\mu _{n}=\int_{TM}\eta d\mu ,\forall \eta \in E,
\end{equation*}%
and $d\left( \omega _{n},\omega \right) \rightarrow 0$ implies that given $%
\epsilon >0,$ there exists $n_{0}\in
\mathbb{N}
$ such that%
\begin{equation*}
\forall n\geq n_{0},\left\Vert \omega _{n}-\omega \right\Vert _{\ell in}<%
\frac{\epsilon }{\left( N+1\right) }.
\end{equation*}%
Since $\mu _{n},\mu \in K_{N},$ we have%
\begin{eqnarray*}
\left\vert \int_{TM}\omega _{n}d\mu _{n}-\int_{TM}\omega d\mu
_{n}\right\vert  &\leq &\int_{B_{N}}\left\vert \omega _{n}-\omega
\right\vert d\mu _{n} \\
&=&\int_{B_{N}}\frac{\left\vert \omega _{n}-\omega \right\vert }{1+N}\left(
1+N\right) d\mu _{n} \\
&\leq &\left( 1+N\right) \int_{B_{N}}\frac{\left\vert \omega _{n}-\omega
\right\vert }{1+\left\vert v\right\vert }d\mu _{n} \\
&\leq &\left( 1+N\right) \int_{B_{N}}\left\Vert \omega _{n}-\omega
\right\Vert _{\ell in}d\mu _{n}<\epsilon
\end{eqnarray*}%
When $n\rightarrow \infty ,$%
\begin{equation*}
\left\vert \lim_{n}\int_{TM}\omega _{n}d\mu _{n}-\int_{TM}\omega d\mu
\right\vert \leq \epsilon ,\forall \epsilon >0.
\end{equation*}%
Therefore%
\begin{equation*}
\lim_{n}\left\langle \omega _{n},\mu _{n}\right\rangle
=\lim_{n}\int_{TM}\omega _{n}d\mu _{n}=\int_{TM}\omega d\mu =\left\langle
\omega ,\mu \right\rangle .
\end{equation*}

\item $K_{N}$ is separated by $E.$ This follows from the duality and
approximation of continuous vector fields by smooth ones and the fact that $%
K_{N}$ is separated by $\Gamma ^{0}\left( M\right) ,$ that is: if $\mu ,\nu
\in K_{N},$ $\mu \neq \nu $, then there exists a $\omega _{0}\in $ $\Gamma
^{0}\left( M\right) $ such that $\mu \left( \omega _{0}\right) \neq \nu
\left( \omega _{0}\right) $ or
\begin{equation*}
\int_{TM}\omega _{0}d\mu \neq \int_{TM}\omega _{0}d\nu .
\end{equation*}%
%
%
%
%
\end{list}

The next ingredient regarding the steps followed by Bernard and Contreras is
the proof of injectivity of the map $\pi :\mathfrak{M}\left( L\right)
\rightarrow G$.

Recall that $\mathfrak{M}\left( L\right) $ the set of minimizing measures
for $L$ and $\widetilde{\mathcal{M}}_{0}\mathcal{=}\overline{\bigcup_{\mu
\in \mathfrak{M}\left( L\right) }\supp\mu }$ is the Mather set.

\begin{lemma}
Let $L$ be a Exact Magnetic Lagrangian. If $\mu $ and $\nu $ are two
distincts minimizing measures, then there exists a 1-form $\omega $ in $%
\Gamma ^{0}\left( M\right) $ such that%
\begin{equation*}
\int_{TM}\omega d\mu \neq \int_{TM}\omega d\nu
\end{equation*}
\end{lemma}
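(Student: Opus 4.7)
The plan is to exploit the graph property of the Mather set together with the fact that for an Exact Magnetic Lagrangian the Mather set lives in a strictly positive energy level, so that the underlying vector field on $\mathcal{M}_{0}$ is nowhere zero. First I would invoke Mather's graph theorem to write $\widetilde{\mathcal{M}}_{0} = \{(x, X(x)) : x \in \mathcal{M}_{0}\}$ for a Lipschitz vector field $X$ on the compact set $\mathcal{M}_{0} = \pi(\widetilde{\mathcal{M}}_{0})$. Since each minimizing measure $\mu$ is supported on this graph, it equals $(\mathrm{id}, X)_{*}\bar{\mu}$ where $\bar{\mu} := \pi_{*}\mu$, so distinct minimizers $\mu \neq \nu$ project to distinct Borel probability measures $\bar{\mu} \neq \bar{\nu}$ on $M$.

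Next I would show that $\|X\|$ is a strictly positive constant on $\mathcal{M}_{0}$. By the Dias Carneiro theorem the Mather set is contained in the energy level $E = \alpha(0)$, and a direct computation for the magnetic Lagrangian gives $E(x,v) = \|v\|^{2}/2$, so $\|X(x)\|^{2} = 2\alpha(0)$ throughout $\mathcal{M}_{0}$. To rule out $\alpha(0) = 0$, I would use the Ma\~{n}\'{e}--Fathi formula
\[
2\alpha(0) \;=\; \inf_{f \in C^{\infty}(M)}\, \sup_{x \in M} \|df(x) - \eta_{x}\|^{2};
\]
if it vanished, there would exist $f_{n} \in C^{\infty}(M)$ with $df_{n} \to \eta$ uniformly, forcing $d\eta = \lim d(df_{n}) = 0$ in the distributional sense and contradicting the hypothesis that $\eta$ is non-closed.

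With $\bar{\mu} \neq \bar{\nu}$, the Riesz representation theorem supplies a continuous function $f: M \to \mathbb{R}$ with $\int_{M} f\, d\bar{\mu} \neq \int_{M} f\, d\bar{\nu}$, and I would then realize $f$ as the pairing of $X$ against an explicit continuous 1-form: on $\mathcal{M}_{0}$ define $\omega_{x} = \frac{f(x)}{\|X(x)\|^{2}}\langle X(x), \,\cdot\, \rangle$, which is continuous because $\|X\|$ is a nonzero constant and $X$ is Lipschitz on $\mathcal{M}_{0}$, and extend $\omega$ continuously to all of $M$ via the Tietze extension theorem (applied in local trivializations of $T^{*}M$). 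Because $\omega_{x}(X(x)) = f(x)$ on $\mathcal{M}_{0}$, we obtain $\int_{TM}\omega\, d\mu = \int_{M} f\, d\bar{\mu} \neq \int_{M} f\, d\bar{\nu} = \int_{TM}\omega\, d\nu$.

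The main obstacle is the strict positivity of $\alpha(0)$: without it, the Mather set would reduce to a subset of the zero section and every continuous 1-form would integrate to zero against every minimizing measure, making the statement false. It is precisely here that the non-closedness of $\eta$, built into the definition of an Exact Magnetic Lagrangian, is used; the remaining ingredients (graph theorem, Riesz representation, Tietze extension) are soft and do not use the magnetic structure.
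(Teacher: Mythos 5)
Your proof is correct and rests on the same two pillars as the paper's: Mather's graph theorem, which lets you read the fiber component of the Mather set as a Lipschitz vector field $X$ with $\left\Vert X\right\Vert ^{2}=2\alpha \left( 0\right) $, and the strict positivity $\alpha \left( 0\right) >0$ forced by the non-closedness of $\eta $ (which the paper simply cites from Paternain--Paternain and you re-derive from $\alpha \left( 0\right) =\inf_{f}\sup_{x}\frac{1}{2}\left\Vert d_{x}f-\eta _{x}\right\Vert ^{2}$). The only difference is in execution: the paper argues by contradiction on a Borel set $A$ with $\mu \left( A\right) \neq \nu \left( A\right) $, approximating the discontinuous field $\chi _{\pi \left( A\right) }X$ by $f_{n}\overline{X}$ and passing to the limit by dominated convergence, whereas you separate the projected measures $\bar{\mu}\neq \bar{\nu}$ by a continuous function $f$ and build the separating form $\frac{f}{2\alpha \left( 0\right) }\left\langle X,\cdot \right\rangle $ directly via Tietze extension --- a slightly more direct route to the same conclusion.
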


\begin{proof}
If $\mu \neq $ $\nu ,$ there exists $A$ in the Borel sigma algebra such that
$\mu \left( A\right) \neq $ $\nu \left( A\right) .$ We can suppose $A$ is a
closed set and $A\subset \supp\left( \mu \right) \cup \supp\left( \nu
\right) .$ The energy function for $L$ is given by $E\left( x,v\right) =%
\frac{1}{2}\left\Vert v\right\Vert ^{2}$ and since
\begin{equation*}
\supp\left( \mu \right) \cup \supp\left( \nu \right) \subset E^{-1}\left(
\alpha \left( 0\right) \right) =\left\{ \left( x,v\right) \in TM:\left\Vert
v\right\Vert ^{2}=2\alpha \left( 0\right) \right\} ,
\end{equation*}%
we have $A\subset E^{-1}\left( \alpha \left( 0\right) \right) .$ Moreover, $%
A\subset \widetilde{\mathcal{M}}_{0},$ where $\widetilde{\mathcal{M}}_{0}$
is the Mather set. By graph property $A$ is a graph on $\pi \left( A\right)
\ $and we can write%
\begin{equation*}
A=\left\{ \left( x,v\right) :x\in \pi \left( A\right) \text{ and }v=\pi
^{-1}\left( x\right) \right\}
\end{equation*}%
where $\pi ^{-1}$ is Lipschitz on the projected Mather set. Let%
\begin{equation*}
X\left( x\right) =\left\{
\begin{tabular}{l}
$\pi ^{-1}\left( x\right) ,$ if $x\in \pi \left( A\right) $ \\
$0,$ if $x\notin \pi \left( A\right) $%
\end{tabular}%
\right.
\end{equation*}%
and consider $f_{n}:M\rightarrow \left[ 0,1\right] $ sequence of smooth bump
functions%
\begin{equation*}
f_{n}\left( x\right) =\left\{
\begin{tabular}{l}
$1,$ if $x\in \pi \left( A\right) $ \\
$0,$ if $x\notin B_{n}\left( \pi \left( A\right) \right) $%
\end{tabular}%
\right.
\end{equation*}%
where $B_{n}\left( \pi \left( A\right) \right) $ is a neighborhood of the
compact $\pi \left( A\right) :$%
\begin{equation*}
B_{n}\left( \pi \left( A\right) \right) =\left\{ x\in M:d\left( x,a\right) <%
\frac{1}{n},\text{ for some }a\in \pi \left( A\right) \right\} .
\end{equation*}%
Let us consider $\overline{X}$ a continuous extension of $X|_{\pi \left(
A\right) }$ on $M.$ Then the vector field $X_{n}=f_{n}\overline{X}\in
\mathfrak{X}^{0}$$\left( M\right) ,$ converges pointwise to $X\left(
x\right) $ and
\begin{equation*}
\left\vert \left\langle X_{n}\left( x\right) ,v\right\rangle \right\vert
=\left\vert \left\langle f_{n}\overline{X}\left( x\right) ,v\right\rangle
\right\vert \leq \left\vert f_{n}\overline{X}\left( x\right) \right\vert
\left\vert v\right\vert \leq \left\vert \overline{X}\left( x\right)
\right\vert \left\vert v\right\vert .
\end{equation*}%
By Dominated Convergence Theorem
\begin{equation*}
\int_{TM}\left\langle X_{n}\left( x\right) ,v\right\rangle d\mu \rightarrow
\int_{TM}\left\langle X\left( x\right) ,v\right\rangle d\mu ,
\end{equation*}%
and%
\begin{equation*}
\int_{TM}\left\langle X_{n}\left( x\right) ,v\right\rangle d\nu \rightarrow
\int_{TM}\left\langle X\left( x\right) ,v\right\rangle d\nu .
\end{equation*}%
Suppose that for all $\omega \in \Gamma ^{0}\left( M\right) ,$
\begin{equation*}
\int_{TM}\omega d\mu =\int_{TM}\omega d\nu .
\end{equation*}%
Then we have%
\begin{equation*}
\int_{TM}\left\langle X_{n}\left( x\right) ,v\right\rangle d\mu
=\int_{TM}\left\langle X_{n}\left( x\right) ,v\right\rangle d\nu .
\end{equation*}%
Therefore%
\begin{eqnarray*}
\int_{TM}\left\langle X\left( x\right) ,v\right\rangle d\mu
&=&\int_{TM}\left\langle X\left( x\right) ,v\right\rangle d\nu \\
\int_{A}\left\langle X\left( x\right) ,v\right\rangle d\mu
&=&\int_{A}\left\langle X\left( x\right) ,v\right\rangle d\nu \\
\int_{A}\left\langle X\left( x\right) ,X\left( x\right) \right\rangle d\mu
&=&\int_{A}\left\langle X\left( x\right) ,X\left( x\right) \right\rangle d\nu
\\
\int_{A}2\alpha \left( 0\right) d\mu &=&\int_{A}2\alpha \left( 0\right) d\nu
\\
\alpha \left( 0\right) \mu \left( A\right) &=&\alpha \left( 0\right) \nu
\left( A\right) ,
\end{eqnarray*}%
Hence $\mu \left( A\right) =\nu \left( A\right) $ because $\alpha \left(
0\right) >0$ (See G. Paternain and M. Paternain in \cite{pat1})$.$ This
finishes the proof.
\end{proof}

The final step is entirely analogous to Lemma 9 of \cite{ber1} and we repeat
it here only for the sake of completeness. Mañé introduced a special type of
probability measures, the holonimic measures which is useful to prove
genericity results. A $C^{1}$ curve $\gamma :I\subset
\mathbb{R}
\rightarrow M$ of period $T>0$ define an element $\mu _{\gamma }\in F$ by%
\begin{equation*}
\mu _{\gamma }\left( f\right) =\frac{1}{T}\int_{0}^{T}f\left( \gamma \left(
s\right) ,\dot{\gamma}\left( s\right) \right) ds
\end{equation*}%
for each $f\in C.$ Let
\begin{equation*}
\Theta =\left\{ \mu _{\gamma }:\gamma \in C^{1}\left(
\mathbb{R}
,M\right) \text{ periodic of integral period}\right\} \subset F.
\end{equation*}%
The set $\mathcal{H}$ of holonomic probabilities is the closure of $\Theta $
in $F.$ One can see $\mathcal{H}$ is convex (see Mañé \cite{man2}). The
elements $\mu $ of $\mathcal{H}$ satisfy $\mu \left( 1\right) =1.$ We define
the compact $H_{N}\subset F$ as the set of holonomic probability measures
which are supported in $B_{N}.$ Therefore we have $\pi \left( H_{N}\right)
\subset K_{N}$.

The each Tonelli Lagrangian $L$ it is associated an element $\bar{L}\in
H_{N}^{\ast }$ as follows%
\begin{equation*}
\mu \mapsto \int_{TM}Ld\mu ,\mu \in H_{N}.
\end{equation*}%
Recalling that we have defined $M_{H_{N}}\left( L\right) $ as the set of
measures $\mu \in H_{N}$ which minimize the action $\int Ld\mu $ on $H_{N}.$

\begin{lemma}
If $L$ is a Exact Magnetic Lagrangian then there exists $N\in
\mathbb{N}
$ such that%
\begin{equation*}
\dim M_{K_{N}}\left( L\right) =\dim \mathfrak{M}\left( L\right) .
\end{equation*}
\end{lemma}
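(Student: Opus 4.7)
The strategy is to choose $N$ large enough that the full set $\mathfrak{M}(L)$ sits inside $H_N$, identify $\mathfrak{M}(L)$ with $M_{H_N}(L)$ using Ma\~n\'e's characterization of holonomic minimizers, and then push this equality through $\pi$ using the injectivity proved in the previous lemma. So the proof splits naturally into three steps: choose $N$; equate invariant minimizers with holonomic minimizers; transport the equality across $\pi$.

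For the first step, I would use that for an Exact Magnetic Lagrangian $L(x,v)=\|v\|^{2}/2+\langle \eta ,v\rangle$ the energy function is $E(x,v)=\|v\|^{2}/2$, and every $\mu \in \mathfrak{M}(L)$ is supported in the level $E^{-1}(\alpha (0))=\{\|v\|=\sqrt{2\alpha (0)}\}$, which is already known to contain the Mather set. Hence any integer $N>\sqrt{2\alpha (0)}$ works: $\supp \mu \subset B_{N}$ for every $\mu \in \mathfrak{M}(L)$. Since every invariant Borel probability on $TM$ is holonomic and achieves the action value $-\alpha (0)$, this already gives the inclusion $\mathfrak{M}(L)\subset M_{H_{N}}(L)$.

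The reverse inclusion $M_{H_{N}}(L)\subset \mathfrak{M}(L)$ is the substantive part. Here I would invoke Ma\~n\'e's theorem: the infimum of $\mu \mapsto \int L\,d\mu$ over $H_{N}$ coincides with its infimum over invariant probabilities (both equal $-\alpha (0)$), and every holonomic minimizer is in fact invariant under the Euler-Lagrange flow. Combined with the first inclusion this yields $\mathfrak{M}(L)=M_{H_{N}}(L)$, and therefore $M_{K_{N}}(L)=\pi (M_{H_{N}}(L))=\pi (\mathfrak{M}(L))$.

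Finally, $\pi$ is continuous and affine, and by the previous lemma it is injective on $\mathfrak{M}(L)$: any two distinct invariant minimizers are separated by some $\omega \in \Gamma ^{0}(M)$. So $\pi$ restricts to an affine continuous bijection between the compact convex sets $\mathfrak{M}(L)$ and $M_{K_{N}}(L)$, and such a bijection preserves the dimension of the affine hull, yielding $\dim M_{K_{N}}(L)=\dim \mathfrak{M}(L)$. The step I expect to be the main obstacle is the reverse inclusion $M_{H_{N}}(L)\subset \mathfrak{M}(L)$: it is not formal, and relies on the nontrivial fact due to Ma\~n\'e that holonomic minimizers are automatically invariant. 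In a complete write-up I would cite this as a black box rather than redevelop it here.
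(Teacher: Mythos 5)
Your proposal is correct and follows essentially the same route as the paper: choose $N$ so that the (compact) Mather set lies in $B_N$, invoke Ma\~n\'e's theorem that the minimizers of the action over holonomic measures are exactly the invariant minimizing measures to get $\mathfrak{M}(L)=M_{H_N}(L)$, and then use the injectivity of $\pi$ on $\mathfrak{M}(L)$ from the previous lemma to preserve the dimension under $\pi$. The paper likewise cites Ma\~n\'e as a black box for the holonomic step, so your explicit splitting into two inclusions is just a more detailed rendering of the same argument.
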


\begin{proof}
Mañé proves in \cite{man2} that $\mathfrak{M}\left( L\right) \subset
\mathcal{H}$. The Mather set $\widetilde{\mathcal{M}}_{0}$ is compact,
therefore $\mathfrak{M}\left( L\right) \mathcal{\subset }H_{N}$ for some $%
N\in
\mathbb{N}
.$ Mañé also proves in \cite{man2} that minimizing measures are also all the
minimizers of action functional $A_{L}\left( \mu \right) =\int Ld\mu $ on
the set of holonomic measures, therefore $\mathfrak{M}\left( L\right)
=M_{H_{N}}\left( L\right) $. By previous Lemma the map $\pi :\mathfrak{M}%
\left( L\right) \rightarrow G$ is injective, so that%
\begin{equation*}
\dim \pi \left( M_{H_{N}}\left( L\right) \right) =\dim \pi \left( \mathfrak{M%
}\left( L\right) \right) =\dim \mathfrak{M}\left( L\right)
\end{equation*}
\end{proof}

\begin{proof}
(of Theorem \ref{teo1}) Given $n\in \mathbb{N}$ apply Theorem \ref{teo2} and
obtain a residual subset $\mathcal{O}_{n}\left( A\right) \subset E=\Gamma
^{1}\left( M\right) $ such that%
\begin{equation*}
L\in A,\omega \in \mathcal{O}_{n}\left( A\right) \Rightarrow \dim
M_{Kn}\left( L+\omega \right) \leq \dim A.
\end{equation*}%
Let $\mathcal{O}\left( A\right) =\bigcap_{n}\mathcal{O}_{n}\left( A\right) .$
By the Baire property $\mathcal{O}\left( A\right) $ is residual. We have
that
\begin{equation*}
L\in A,\omega \in \mathcal{O}\left( A\right) ,n\in \mathbb{N}\Rightarrow
\dim M_{K_{n}}\left( L+\omega \right) \leq \dim A.
\end{equation*}%
Then by previous Lemma, $\dim \mathfrak{M}\left( L+\omega \right) \leq \dim
A $ for all $L\in A$ and all $\omega \in \mathcal{O}\left( A\right) .$ This
finishes the proof.\bigskip
\end{proof}

\section{ Some Dynamical Consequences\label{sec3}}

As it was pointed out in the Introduction, the Mather set $\widetilde{%
\mathcal{M}}_{c}$ associated to a cohomology class $c$ is contained in
another compact invariant set called the Aubry set $\widetilde{\mathcal{A}}%
_{c}$. It is also a graph over a compact subset of the manifold $M$ and it
is contained in the same energy level $\alpha (c)$ as $\widetilde{\mathcal{M}%
}_{c}$. Moreover, $\widetilde{\mathcal{A}}_{c}$ is chain recurrent set. All
these properties are proven in \cite{gon2}, see also \cite{fa1}.

In order to state the dynamical consequences of our Theorem \ref{teo1}, we
need to introduce the Aubry set and the concept of static classes for a
general Tonelli Lagrangian.

Let us consider the \textit{action} on a curve $\gamma :[0,T]\rightarrow M$
defined by

\begin{equation*}
\mathbb{A}_{L-c+k}\left( \gamma \right) =\int_{0}^{T}[L(\gamma ,\dot{\gamma}%
)-\eta (\gamma )\dot{(}\gamma )+k]dt
\end{equation*}%
where $k$ is a real number and $\eta $ is a representative of the class $c.$
The energy level $\alpha (c),$ namely Mañé's critical value of the
Lagrangian $L-c,$ may be characterized in several ways. $\alpha (c)$ is
defined by Mañé as the infimum of the numbers $k$ such that the action $%
\mathbb{A}_{L-c+k}\left( \gamma \right) $ is nonnegative for all closed
curve $\gamma :\left[ 0,T\right] \rightarrow M.$

Recall that, for a given real number $k$ the action potential $\Phi
_{L-c+k}:M\times M\rightarrow
\mathbb{R}
$ is defined by%
\begin{equation*}
\Phi _{L-c+k}\left( x,y\right) =\inf \mathbb{A}_{L-c+k}(\gamma )
\end{equation*}%
infimum taken over the curves $\gamma $ joining $x$ the $y$.

Mañé proved that%
\begin{equation*}
-\alpha (c)=\inf_{\mu \in \mathfrak{M}\left( L\right) }\int_{TM}\left(
L-\eta \right) d\mu ,
\end{equation*}%
where $\eta $ is a representative of the class $c$ and that $\alpha (c)$ is
the smallest number such that the action potential is finite, in other
words, if $k<\alpha (c)$, then $\Phi _{L-c+k}(x,y)=-\infty $ and for $k\geq
\alpha (c)$, $\Phi _{L-c+k}(x,y)\in {\mathbb{R}}$.

Observe that by Tonelli's Therorem (See for example in \cite{gon2}), for
fixed $t>0$, there always exists a minimizing extremal curve connecting $x$
to $y$ in time $t$. The potential calculates the global (or time
independent) infimum of the action. This value may not be realized by a
curve.

The potential $\Phi _{L-c+\alpha (c)}$ is not symmetric in general but%
\begin{equation*}
\delta _{M}\left( x,y\right) =\Phi _{L-c+\alpha (c)}\left( x,y\right) +\Phi
_{L-c+\alpha (c)}\left( y,x\right)
\end{equation*}%
is a pseudo-metric. A curve $\gamma :%
\mathbb{R}
\rightarrow M$ is called \textit{semistatic} if minimizes action between any
of its points:%
\begin{equation*}
\mathbb{A}_{L-c+\alpha (c)}\left( \gamma |_{\left[ a,b\right] }\right) =\Phi
_{L-c+\alpha (c)}\left( \gamma \left( a\right) ,\gamma \left( b\right)
\right) ,
\end{equation*}%
and $\gamma $ is called \textit{static} if is semistatic and $\delta
_{M}\left( \gamma \left( a\right) ,\gamma \left( b\right) \right) =0$ for
all $a,b\in
\mathbb{R}
.$

For example, the orbits contained in the Mather set $\widetilde{\mathcal{M}}%
_{c}$ project onto static curves. The Aubry set $\widetilde{\mathcal{A}}_{c}$
is the set of the points $\left( x,v\right) \in TM$ such that the projection
$\gamma \left( t\right) =\pi \circ \varphi _{t}\left( x,v\right) $ is a
static curve, where $\varphi _{t}$ is the Euler-Lagrange flow. We just saw
that the Mather set ${\widetilde{\mathcal{M}}}_{c}$ is contained in the
Aubry set $\widetilde{\mathcal{A}}_{c}.$

Denoting the projected Aubry set by $\mathcal{A}_{c}$, the function $\delta
_{M}|_{\mathcal{A}_{c}\mathcal{\times A}_{c}}:\mathcal{A}_{c}\mathcal{\times
\mathcal{A}}_{c}\mathcal{\rightarrow
\mathbb{R}
}$ is called \textit{Mather semi-distance.} We define the \textit{quotient
Aubry set} $\left( \mathcal{A}_{M},\delta _{M}\right) $ to be the metric
space by identifying two points $x,y\in \mathcal{A}_{c}$ if their
semi-distance $\delta _{M}\left( x,y\right) $ vanishes. When we consider $%
\delta _{M}$ on the quotient space $\mathcal{A}_{M}$ we will call it the
\textit{Mather distance} and the elements of $\mathcal{A}_{M}$ are called
\textit{static classes }for $L-c$. Observe that the static classes are
disjoint subsets of the energy level set $\alpha (c)$ and a static curve is
in the same static class.

Then we have the following corollary of the Theorem \ref{teo1}:

\begin{corollary}
Let $L$ be a Exact Magnetic Lagrangian. Then there exists a residual subset $%
\mathcal{O}$ of $\Gamma ^{1}\left( M\right) $ such that for all $c\in
H^{1}\left( M,%
\mathbb{R}
\right) \ $and for all $\omega \in \mathcal{O}$, the Lagrangian $L+\omega -c$
has at most $1+\dim H^{1}\left( M,%
\mathbb{R}
\right) $ static classes.
\end{corollary}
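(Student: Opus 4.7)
The plan is to take $\mathcal{O}\subset\Gamma^{1}(M)$ to be the residual set already furnished by the previous corollary, so that for every $\omega\in\mathcal{O}$ and every $c\in H^{1}(M,\mathbb{R})$ the Lagrangian $L+\omega-c$ has at most $1+\dim H^{1}(M,\mathbb{R})$ ergodic minimizing measures. The whole task is then to show, for any Tonelli Lagrangian, that the number of static classes is bounded by the number of ergodic minimizing measures; once that inequality is in hand the corollary is immediate.

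To set up the bound I would fix $c$ and $\omega$, pick a static class $\Sigma\subset\mathcal{A}_{c}$ for $L+\omega-c$, and form its lift $\widetilde{\Sigma}=\widetilde{\mathcal{A}}_{c}\cap\pi^{-1}(\Sigma)$, where $\pi:TM\to M$ is the canonical projection. Since a static curve stays in its own static class and $\widetilde{\mathcal{A}}_{c}$ is a Lipschitz graph over $\mathcal{A}_{c}$, the lift $\widetilde{\Sigma}$ is a compact $\varphi_{t}$-invariant subset of $\widetilde{\mathcal{A}}_{c}$. Krylov--Bogolyubov then produces an invariant probability measure supported on $\widetilde{\Sigma}$, and by ergodic decomposition I may take it ergodic; call it $\mu_{\Sigma}$.

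The key point is that $\mu_{\Sigma}$ is in fact a minimizing measure for $L+\omega-c$. For $\mu_{\Sigma}$-a.e.\ point $(x,v)$ the projected orbit $\gamma(t)=\pi\circ\varphi_{t}(x,v)$ is static, so $\mathbb{A}_{L-c+\alpha(c)}(\gamma|_{[0,T]})=\Phi_{L-c+\alpha(c)}(\gamma(0),\gamma(T))$; combined with the uniform boundedness of $\Phi_{L-c+\alpha(c)}$ on $\mathcal{A}_{c}\times\mathcal{A}_{c}$, dividing by $T$ and letting $T\to\infty$ forces $\int(L-c+\alpha(c))\,d\mu_{\Sigma}=0$ via Birkhoff's theorem, so $\mu_{\Sigma}\in\mathfrak{M}(L+\omega-c)$ with support contained in $\widetilde{\mathcal{M}}_{c}\cap\widetilde{\Sigma}$. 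Conversely, the support of any ergodic minimizing measure is a union of whole orbits each of which projects to a static curve sitting in a single static class; so $\mu_{\Sigma}$ is attached to a unique $\Sigma$, and the rule $\Sigma\mapsto\mu_{\Sigma}$ defines an injection from static classes into ergodic minimizing measures of $L+\omega-c$. Combined with the previous corollary this yields the announced bound $1+\dim H^{1}(M,\mathbb{R})$.

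The main obstacle I anticipate is the verification that every invariant measure supported on $\widetilde{\mathcal{A}}_{c}$ is automatically minimizing: this rests on the uniform boundedness of $\Phi_{L-c+\alpha(c)}$ along the Aubry set, a standard but nontrivial ingredient of Mather--Fathi--Ma\~n\'e theory that I would rather cite from \cite{fa1} or \cite{gon2} than reprove here. The other steps are formal consequences of the definitions recalled in Section \ref{sec3} together with the previous corollary.
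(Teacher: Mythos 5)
Your proposal is correct and follows essentially the same route as the paper: both reduce the corollary to showing that each static class supports at least one ergodic minimizing measure (so that the count of static classes is bounded by the count of ergodic minimizing measures from the previous corollary), and both obtain that measure as an invariant probability supported in the lifted class, using the boundedness of the action potential along static orbits to conclude it is minimizing. Your use of Krylov--Bogolyubov plus ergodic decomposition is in fact slightly more careful than the paper's construction, which takes a weak-$\star$ limit of Birkhoff averages along a single orbit and asserts ergodicity of the limit, a step that strictly speaking also requires passing to an ergodic component.
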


\begin{proof}
It suffices to show that each static class supports at least one ergodic
minimizing measure. In fact, let $\Lambda $ be a static class for $L+\omega
-c$ and $\left( p,v\right) \in \widetilde{\mathcal{A}}_{c}$ with $p\in
\Lambda .$ For $T>0$ we define a Borel probability measure $\mu _{T}$ on $TM$
by%
\begin{equation*}
\mu _{T}\left( f\right) =\frac{1}{T}\int_{0}^{T}f\left( \varphi _{s}\left(
p,v\right) \right) ds
\end{equation*}%
All these probability measures have their supports contained in $\widetilde{%
\mathcal{A}}_{c}$ that is a compact subset, consequently, we can extract a
sequence $\mu _{T_{n}}$ weakly convergent to $\mu $:%
\begin{equation*}
\mu \left( f\right) =\lim_{T\rightarrow \infty }\frac{1}{T_{n}}%
\int_{0}^{T_{n}}f\left( \varphi _{s}\left( p,v\right) \right) ds,
\end{equation*}%
which is a ergodic minimizing measure whose support is contained in $\Lambda
$ (See \cite{fa1} for details).
\end{proof}

Now we present some dynamical consequences assuming that the Lagrangian $L$
has finitely many static classes. In this manner, by previous corollary, the
properties presented here are generic on set of Exact Magnetic for all
cohomology class.

The projected Aubry set $\mathcal{A}_{c}$ is chain recurrent and the static
classes are connected so they are the connected components of $\mathcal{A}%
_{c}$. Moreover the static classes are the chain transitive components of $%
\mathcal{A}_{c}$ and we obtain the following cycle property: If two supports
of ergodic minimizing measures are contained in a static class, then there
exists a cycle consisting of static curves in the same static class
connecting them.

Contreras and Paternain prove in \cite{gon1} that between two static classes
there exists a chain of static classes connected by heteroclinic semistatic
orbits. More precisely they show

\begin{theorem}
Suppose that the number of static classes is finite. Then given two static
classes $\Lambda _{k}$ and $\Lambda _{l},$ there exist classes $\Lambda
_{1}=\Lambda _{k},\Lambda _{2},...,\Lambda _{n}=\Lambda _{l}$ and $\theta
_{1},\theta _{2},...,\theta _{n-1}\in TM$ such that for all $i=1,...,n-1$ we
have that $\gamma _{i}\left( t\right) =\pi \circ \varphi _{t}\left( \theta
_{i}\right) $ are semistatic curves, $\alpha \left( \theta _{i}\right)
\subset \Lambda _{i}$ and $\omega \left( \theta _{i}\right) \subset \Lambda
_{i+1}.$
\end{theorem}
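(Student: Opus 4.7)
I would prove the theorem by encoding the heteroclinic structure in a finite directed graph $\mathcal{G}$ whose vertices are the static classes $\Lambda_1,\ldots,\Lambda_N$ and whose arrows $\Lambda\to\Lambda'$ correspond to the existence of some $\theta\in TM$ whose projected orbit $\gamma_\theta(t)=\pi\circ\varphi_t(\theta)$ is semistatic with $\alpha(\theta)\subset\Lambda$ and $\omega(\theta)\subset\Lambda'$. Self-loops $\Lambda\to\Lambda$ exist automatically since each class is a non-empty compact invariant subset of $\widetilde{\mathcal{A}}_c$. The theorem is then equivalent to the statement that $\mathcal{G}$ is strongly connected, i.e.\ that any two vertices are joined by a directed path.

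The core step is the construction of arrows. Given a pair of static classes $\Lambda$, $\Lambda'$ and points $x\in\Lambda$, $y\in\Lambda'$, the action potential $\Phi_{L-c+\alpha(c)}(x,y)$ is finite, so I would choose Tonelli minimizers $\gamma_n:[0,T_n]\to M$ with $\gamma_n(0)=x$, $\gamma_n(T_n)=y$, $T_n\to\infty$, and actions $\mathbb{A}_{L-c+\alpha(c)}(\gamma_n)$ tending to the Peierls barrier $h(x,y)$. Tonelli superlinearity bounds the energies of these curves uniformly, hence also their $C^1$-norms. Picking time shifts $t_n\in[0,T_n]$ with $t_n\to\infty$ and $T_n-t_n\to\infty$, the vectors $(\gamma_n(t_n),\dot\gamma_n(t_n))$ have a subsequential limit $\theta\in TM$. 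Lower semicontinuity of the action together with the approximately-minimizing property of the $\gamma_n$ shows that the full orbit $\bar\gamma:\mathbb{R}\to M$ of $\theta$ is semistatic; by standard arguments (see \cite{gon1}) its $\alpha$- and $\omega$-limit sets are compact, invariant, chain-transitive, and contained in $\widetilde{\mathcal{A}}_c$, hence each lies in a single static class, producing an arrow in $\mathcal{G}$.

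Strong connectivity of $\mathcal{G}$ follows by iteration. Fix $\Lambda_k$ and let $S$ be the set of classes reachable from $\Lambda_k$ by a directed path in $\mathcal{G}$. If $\Lambda_l\notin S$, pick $\Lambda^\ast\in S$ and $y\in\Lambda_l$ and apply the construction above to $x\in\Lambda^\ast$. The crucial point is the choice of the shifts $t_n$: taken as the last time the minimizers $\gamma_n$ leave a small neighborhood of $\Lambda^\ast$ (well-defined for $n$ large because such a neighborhood excludes $y$), this forces $\alpha(\theta)\subset\Lambda^\ast$ while $\omega(\theta)$ lies in some class $\Lambda^{\mathrm{out}}$ disjoint from $\Lambda^\ast$. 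Adding $\Lambda^{\mathrm{out}}$ to $S$ strictly enlarges $S$; finiteness of the number of classes ensures the process terminates with $\Lambda_l\in S$ after at most $N$ steps, concatenating the corresponding arrows yields the desired chain $\theta_1,\ldots,\theta_{n-1}$.

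The main obstacle is the step three choice of time shifts and the verification that the resulting semistatic orbit's $\alpha$-limit is precisely $\Lambda^\ast$ (and not some different class visited along the way by the approximating minimizers). This requires a delicate analysis of how minimizers interact with small neighborhoods of the Aubry set, based on the uniform equicontinuity of the Peierls barrier on $\mathcal{A}_c$ and on the fact that static classes are the connected components of the Mather pseudo-metric $\delta_M$-zero relation; this is exactly the technical content worked out in \cite{gon1}, and I would invoke it rather than redo it here.
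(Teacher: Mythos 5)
First, a point of comparison: the paper does not prove this theorem at all --- it is quoted verbatim from Contreras--Paternain \cite{gon1} as an external result, so there is no in-paper argument to measure yours against. Your sketch is essentially a compressed version of the proof in \cite{gon1} (Tonelli minimizers between the two classes with diverging times and actions converging to the potential, uniform energy bounds, extraction of limit orbits at suitable time shifts, and chain-transitivity forcing the $\alpha$- and $\omega$-limit sets of a semistatic orbit into single static classes). At the decisive technical point --- the uniform control of how the minimizers $\gamma _{n}$ interact with small neighborhoods of the Aubry set --- you explicitly defer to \cite{gon1}. That is legitimate as a citation, but it means your write-up is an outline of the known proof rather than an independent one.

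Within the outline there is one genuine gap: the induction on the reachability set $S$. You take $x\in \Lambda ^{\ast }$ for a single class $\Lambda ^{\ast }\in S$, shift at the last exit time from a small neighborhood of $\Lambda ^{\ast }$, and conclude that $\omega \left( \theta \right) $ lies in a class $\Lambda ^{\mathrm{out}}$ distinct from $\Lambda ^{\ast }$, whence ``$S$ strictly enlarges''. But $\Lambda ^{\mathrm{out}}\neq \Lambda ^{\ast }$ does not make $\Lambda ^{\mathrm{out}}$ new: it may already belong to $S$, in which case no progress is made, and a single extraction per step gives no guarantee that the process ever reaches $\Lambda _{l}$. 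The standard repair is to shift at the last exit time from a neighborhood of the whole union $\bigcup_{\Lambda \in S}\Lambda $ (equivalently, to track the full itinerary of the $\gamma _{n}$ through disjoint neighborhoods of all the classes, as in \cite{gon1}): after that time the curves never re-enter the neighborhood of $S$, so $\omega \left( \theta \right) $ lands in a class genuinely outside $S$, while $\alpha \left( \theta \right) $ lands in some class of $S$ to which a chain from $\Lambda _{k}$ already exists. You should also make explicit why the backward tails of the shifted curves remain in the chosen neighborhood for times tending to infinity (so that $\alpha \left( \theta \right) $ is actually trapped in a class of $S$); this rests on the uniform bound on the total time the $\gamma _{n}$ spend outside the neighborhoods of the Aubry set, which is precisely the lemma you are citing rather than proving.
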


Another important property, demonstrated by P. Bernard in \cite{ber2}, is
the semi-continuity of the Aubry set%
\begin{equation*}
H^{1}\left( M,%
\mathbb{R}
\right) \ni c\mapsto \widetilde{\mathcal{A}}_{c},
\end{equation*}%
when $\mathcal{A}_{M}$ is finite. In order to be more precise he showed the
following Theorem

\begin{theorem}
Let $L_{k}$ be a sequence of Tonelli Lagrangians converging to $L.$ Then
given a neighborhood $U$ of $\widetilde{\mathcal{A}}_{0}$ in $TM,$ there
exists $k_{0}$ such that $\widetilde{\mathcal{A}}_{0}\left( L_{k}\right)
\subset U$ for each $k\geq k_{0},$ where $\widetilde{\mathcal{A}}_{0}\left(
L_{k}\right) $ is the Aubry set for the Lagrangian $L_{k}.$
\end{theorem}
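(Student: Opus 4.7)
I would argue by contradiction. Suppose the conclusion fails: for some open neighborhood $U$ of $\widetilde{\mathcal{A}}_0(L)$ and some subsequence (still denoted $L_k$), there are points $\theta_k \in \widetilde{\mathcal{A}}_0(L_k) \setminus U$. Since $\widetilde{\mathcal{A}}_0(L_k)$ is contained in the energy level $E_{L_k}^{-1}(\alpha_{L_k}(0))$, and both the energy functions and the critical values depend continuously on the Tonelli Lagrangian, the sequence $\{\theta_k\}$ is bounded in $TM$. Extracting a subsequence, $\theta_k \to \theta_\infty$, and clearly $\theta_\infty \notin U$. The goal is then to show $\theta_\infty \in \widetilde{\mathcal{A}}_0(L)$, which contradicts $\widetilde{\mathcal{A}}_0(L) \subset U$.

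Next I would show that the orbit $\gamma(t)=\pi\circ\varphi^L_t(\theta_\infty)$ is semistatic for $L$. Set $\gamma_k(t)=\pi\circ\varphi^{L_k}_t(\theta_k)$; continuity of the Euler--Lagrange flow under Lagrangian perturbation yields $\gamma_k\to\gamma$ in $C^1$ on every compact interval. Because $\gamma_k$ is static for $L_k$,
\begin{equation*}
\mathbb{A}_{L_k+\alpha_{L_k}(0)}(\gamma_k|_{[a,b]}) \;=\; \Phi_{L_k+\alpha_{L_k}(0)}(\gamma_k(a),\gamma_k(b))
\end{equation*}
for each $a<b$. The left-hand side converges to $\mathbb{A}_{L+\alpha_L(0)}(\gamma|_{[a,b]})$ by uniform convergence of $L_k$ on compacts together with the $C^1$-convergence of $\gamma_k$. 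For the right-hand side, one uses upper semi-continuity of the action potential: any competitor $\sigma$ realizing $\Phi_{L+\alpha_L(0)}(\gamma(a),\gamma(b))$ up to $\varepsilon$ can be reparametrized and slightly perturbed near its endpoints to join $\gamma_k(a)$ to $\gamma_k(b)$ with action close to the original under $L_k$; this gives $\limsup_k \Phi_{L_k+\alpha_{L_k}(0)}(\gamma_k(a),\gamma_k(b)) \le \Phi_{L+\alpha_L(0)}(\gamma(a),\gamma(b))$. Combined with the trivial inequality $\mathbb{A}_{L+\alpha_L(0)}(\gamma|_{[a,b]})\ge \Phi_{L+\alpha_L(0)}(\gamma(a),\gamma(b))$, equality holds and $\gamma$ is semistatic.

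The main obstacle is upgrading \emph{semistatic} to \emph{static}, i.e.\ showing $\delta_M^L(\gamma(a),\gamma(b))=0$. Since $\gamma_k$ is static for $L_k$, there exist curves $\tau_k$ from $\gamma_k(b)$ to $\gamma_k(a)$ whose action added to $\mathbb{A}_{L_k+\alpha_{L_k}(0)}(\gamma_k|_{[a,b]})$ tends to zero; the difficulty is that the $\tau_k$ need not converge, and without control on their behavior they could a priori wander among infinitely many static classes, preventing extraction of a limiting cycle for $L$. This is precisely where the hypothesis that $\mathcal{A}_M(L)$ is \emph{finite} enters: the $\alpha$- and $\omega$-limits of $\gamma$ lie in $\widetilde{\mathcal{A}}_0(L)$, hence in finitely many static classes, and finiteness allows one to discard all but finitely many ``connecting patterns'' for the $\tau_k$, extract a convergent subsequence, and build a limit loop for $L$ with vanishing action via the same semi-continuity as above. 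Once $\delta_M^L(\gamma(a),\gamma(b))=0$ is established, $\gamma$ is static and $\theta_\infty\in\widetilde{\mathcal{A}}_0(L)\subset U$, yielding the desired contradiction.
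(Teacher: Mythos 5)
This theorem is not proved in the paper: it is quoted verbatim from P.~Bernard's \emph{On the Conley decomposition of Mather sets} (\cite{ber2}), with the finiteness of $\mathcal{A}_{M}$ (or the weaker coincidence hypothesis) stated in the surrounding text rather than inside the theorem environment. So your proposal has to be judged on its own merits rather than against an in-paper argument.

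Your first two steps are correct and standard: the contradiction setup, the compactness coming from $\widetilde{\mathcal{A}}_0(L_k)\subset E_{L_k}^{-1}(\alpha_{L_k}(0))$ together with continuity of $c\mapsto\alpha_{L}(c)$ in $L$, and the passage to the limit showing that $\gamma$ is semistatic. (The $\limsup$ inequality for the critical potentials is fine: $\Phi_{L+\alpha_L(0)}(x,y)$ is an infimum over fixed-time competitors of quantities continuous in $L$, hence upper semicontinuous in $L$, and it is Lipschitz in $(x,y)$ uniformly for $L_k$ near $L$.) But this only establishes upper semicontinuity of the Ma\~n\'e set $\widetilde{\mathcal{N}}_0$, which holds with no hypothesis on $\mathcal{A}_M$ and is strictly weaker than the claim: even with a single static class the Ma\~n\'e set can strictly contain the Aubry set (pendulum separatrices), so landing $\theta_\infty$ in $\widetilde{\mathcal{N}}_0(L)$ does not contradict $\theta_\infty\notin U$.

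The genuine gap is therefore exactly the step you flag, and the resolution you sketch is not an argument. The closing curves $\tau_k$ have time-lengths $T_k$ that may tend to infinity, and in the limit they need not close up into a loop: the small action defect $\mathbb{A}_{L_k+\alpha_{L_k}(0)}(\gamma_k|_{[a,b]})+\mathbb{A}_{L_k+\alpha_{L_k}(0)}(\tau_k)\to 0$ can be spread over arbitrarily long excursions whose limits are only pieces of semistatic curves for $L$, and ``discarding all but finitely many connecting patterns'' is not a construction --- you would still have to show that the potential drops accumulated along the limiting pieces concatenate to give $\delta_M^L(\gamma(a),\gamma(b))=0$, which is the whole difficulty. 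Bernard's actual proof does not chase the $\tau_k$ at all: under the coincidence/finiteness hypothesis he builds, from one weak KAM solution per static class, a critical subsolution $u$ that is strict outside $\mathcal{A}_0$ and characterizes $\widetilde{\mathcal{A}}_0$ as the set of orbits calibrated by $u$; the set of orbits calibrated by nearby subsolutions of nearby Lagrangians is then upper semicontinuous, which is where the hypothesis genuinely enters. Without some such mechanism your plan cannot be completed --- indeed the statement is false for general Tonelli Lagrangians, so any proof must use the finiteness in a more structural way than a subsequence extraction.
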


In fact Bernard showed that this Theorem is true with a weaker hypothesis
than $\mathcal{A}_{M}$ be finite, namely coincidence hypothesis (See \cite%
{ber2}).

\section{Example}

In this section we present an example of a Exact Magnetic Lagrangian on flat
torus $\mathbb{T}^{2}$ whose quotient Aubry set $\mathcal{A}_{M}$ is a
Cantor set, therefore not every Exact Magnetic Lagrangian has finitely many
static classes.

Let $L:T\mathbb{T}^{2}\rightarrow
\mathbb{R}
$ be a Exact Magnetic Lagrangian defined by%
\begin{equation*}
L\left( x,y,v_{1},v_{2}\right) =\frac{\left\Vert \left( v_{1},v_{2}\right)
\right\Vert ^{2}}{2}+\left\langle \left( 0,f\left( x\right) \right) ,\left(
v_{1},v_{2}\right) \right\rangle ,
\end{equation*}%
where $f$ is a $C^{2}$ nonpositive and periodic function whose set of
minimum points $\Gamma _{\min }$ is a Cantor set and $f|_{\Gamma _{\min }}$
is a negative constant.

%
%

In this case the system of Euler-Lagrange is given by

\begin{equation*}
\left\{
\begin{array}{l}
\dot{x}=v \\
\dot{v}=-f^{\prime }\left( x\right) Jv%
\end{array}%
\right.
\end{equation*}%
where $J$ is the canonical sympletic matrix.

\begin{lemma}
The Mañé's critical values of $L$ is $\alpha (0)=f\left( a\right) ^{2}/2,$
where $a\in \Gamma _{\min }.$ Moreover, the closed curves $\gamma _{a}$
defined by $\gamma _{a}\left( t\right) =\left( a,-f\left( a\right) t\right)
, $ are static curves.
\end{lemma}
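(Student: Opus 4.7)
The plan is to complete the square in $v$ so that $L$ decomposes into a fiberwise nonnegative term plus a function of $x$ alone. Explicitly,
\[
L(x,v) + \tfrac{1}{2}f(x)^{2} = \tfrac{1}{2}\lVert v-(0,-f(x))\rVert^{2} \geq 0,
\]
with equality iff $v = (0,-f(x))$. Since $f\leq 0$ attains its minimum value $f(a)$ on $\Gamma_{\min}$, one has $f(x)^{2} \leq f(a)^{2}$ for every $x\in \mathbb{T}^{2}$, hence $L(x,v) + \tfrac{1}{2}f(a)^{2} \geq 0$ globally, with equality exactly when $x\in \Gamma_{\min}$ and $v=(0,-f(a))$. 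Integrating along any closed curve $\gamma$ gives $\mathbb{A}_{L+k}(\gamma)\geq 0$ whenever $k\geq \tfrac{1}{2}f(a)^{2}$, so by Mañé's characterization of the critical value, $\alpha(0) \leq \tfrac{1}{2}f(a)^{2}$.

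For the matching lower bound I would analyze $\gamma_{a}$ itself. Fix $a\in \Gamma_{\min}$; then $\dot\gamma_{a}\equiv (0,-f(a))$ and $\ddot\gamma_{a}\equiv 0$, and because $a$ is a minimum of $f$ we have $f'(a)=0$, so $\gamma_{a}$ solves $\dot v = -f'(x)Jv$. On $\mathbb{T}^{2}$ the curve is closed with period $T = 1/|f(a)|$. The previous identity shows $L(\gamma_{a},\dot\gamma_{a})\equiv -\tfrac{1}{2}f(a)^{2}$, so $\mathbb{A}_{L+k}(\gamma_{a}) = (k-\tfrac{1}{2}f(a)^{2})T < 0$ for $k< \tfrac{1}{2}f(a)^{2}$. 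This forces $\alpha(0) \geq \tfrac{1}{2}f(a)^{2}$, so $\alpha(0)=\tfrac{1}{2}f(a)^{2}$.

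Finally I would verify the static property. Along $\gamma_{a}$ the integrand $L+\alpha(0)$ vanishes identically, so $\mathbb{A}_{L+\alpha(0)}(\gamma_{a}|_{[s,t]})=0$, and any competing curve from $\gamma_{a}(s)$ to $\gamma_{a}(t)$ has nonnegative action by the global inequality, so $\Phi_{L+\alpha(0)}(\gamma_{a}(s),\gamma_{a}(t)) = 0$; this already gives semistaticity. To upgrade to staticity one needs the same equality with $s$ and $t$ exchanged, and here the closedness of $\gamma_{a}$ on $\mathbb{T}^{2}$ is essential: traversing $\gamma_{a}$ itself for the complementary time $T-(t-s)$ takes $\gamma_{a}(t)$ back to $\gamma_{a}(s)$ with zero action, giving $\Phi_{L+\alpha(0)}(\gamma_{a}(t),\gamma_{a}(s)) \leq 0$, while nonnegativity of $L+\alpha(0)$ gives $\Phi\geq 0$. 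Hence $\delta_{M}(\gamma_{a}(s),\gamma_{a}(t))=0$ and $\gamma_{a}$ is static. The one step that deserves care is the alignment of the two bounds for $\alpha(0)$: it relies on the observation that the nonpositivity of $f$ makes $\Gamma_{\min}$ simultaneously the set where $f^{2}$ is maximized, so that the globally valid constant $\tfrac{1}{2}f(a)^{2}$ is also the minimum value of $L$ actually attained along an orbit.
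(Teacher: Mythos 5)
Your proof is correct and follows essentially the same route as the paper: the same completion of the square $L+\tfrac12 f(x)^2=\tfrac12\lVert v-(0,-f(x))\rVert^2$ gives the upper bound $\alpha(0)\le \tfrac12 f(a)^2$, and a vertical closed curve over $a\in\Gamma_{\min}$ with negative $(L+k)$-action for $k<\tfrac12 f(a)^2$ gives the lower bound (the paper tests a family $\gamma_k(t)=(a,\sqrt{2k}\,t)$, you test the single curve $\gamma_a$ against varying $k$ — an immaterial difference). Your explicit upgrade from semistatic to static via the complementary arc of the closed curve is exactly what the paper leaves implicit in the phrase "a semistatic closed curve is static."
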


\begin{proof}
Given any curve $\beta \left( t\right) =\left( x\left( t\right) ,y\left(
t\right) \right) $ on $\mathbb{T}^{2},$ we have%
\begin{equation}
L\left( \beta ,\dot{\beta}\right) =\frac{\dot{x}^{2}+\dot{y}^{2}}{2}+f\left(
x\right) \dot{y}\left( t\right) =\frac{\left( \dot{y}+f\left( x\right)
\right) ^{2}+\dot{x}^{2}}{2}-\frac{f\left( x\right) ^{2}}{2}\geq -\frac{%
f\left( a\right) ^{2}}{2}.  \label{eq6}
\end{equation}%
Then%
\begin{equation*}
\mathbb{A}_{L+f\left( a\right) ^{2}/2}\left( \beta \right)
=\int_{0}^{T}\left( L\left( \beta ,\dot{\beta}\right) +\frac{f\left(
a\right) ^{2}}{2}\right) dt\geq 0,
\end{equation*}%
and we obtain $\alpha (0)\leq \frac{f\left( a\right) ^{2}}{2}.$ Observe that
if $0<k\leq \frac{f\left( a\right) ^{2}}{2},$ the closed curve given by $%
\gamma _{k}\left( t\right) =\left( a,\sqrt{2k}t\right) ,$ where $a\in \Gamma
_{\min },$ is Euler-Lagrange solution and its energy is $E=k$. Moreover,%
\begin{equation*}
\mathbb{A}_{L+k}\left( \gamma _{k}\right) =\int_{0}^{T}\left( L\left( \gamma
_{k},\dot{\gamma}_{k}\right) +k\right) dt=\int_{0}^{T}\left( 2k+f\left(
a\right) \sqrt{2k}\right) dt.
\end{equation*}%
Therefore%
\begin{equation*}
\mathbb{A}_{L+k}\left( \gamma _{k}\right) <0\text{ if }k<\frac{f\left(
a\right) ^{2}}{2}\text{ and }\mathbb{A}_{L+k}\left( \gamma _{k}\right) =0%
\text{ if }k=\frac{f\left( a\right) ^{2}}{2}.
\end{equation*}%
This shows that $\alpha (0)=\frac{f\left( a\right) ^{2}}{2}$ and the curve $%
\gamma _{a}$ is semistatic, i.e., realizes the action potential. Since $%
\gamma _{a}$ is a semistatic closed curve, it is static curve.
\end{proof}

To complete the example, it suffices to show that the application $\Psi
:\Gamma _{\min }\rightarrow \mathcal{A}_{M},$ given by $\Psi \left( a\right)
=\left[ \left( a,0\right) \right] ,$ where $\left[ \left( a,0\right) \right]
$ is a representative of the static class conteining the curve $\gamma _{a},$
is a Lipschitz bijection. In fact, since the action potential $\Phi
_{L+\alpha (0)}$ is Lipschitz, the distance $\delta _{M}$ on quotient Aubry
set $\mathcal{A}_{M}$ also is Lipschitz.

In order to show the surjectivity of $\Psi $ it is enough to show that the
projected Aubry set $\mathcal{A}_{0}$ is exactly the union of the closed
curves $\gamma _{a}$ with $a\in \Gamma _{\min }.$ Suppose that there exists $%
p\in \mathcal{A}_{0}$ such that $\pi \left( p\right) \notin \Gamma _{\min },$
where $\pi $ is the canonical projection of $\mathbb{T}^{2}$ on $%
\mathbb{R}
/%
\mathbb{Z}
.$ Then there exists a neighborhood $V_{p}$ of $p$ such that $f\left(
x\right) >f\left( a\right) $ for all $a\in \Gamma _{\min }$ and $x\in V_{p}$%
. Let $\gamma $ be a piece, contained in $V_{p},$ of the static curve
passing through $p$. The inequality \ref{eq6} implies $\mathbb{A}_{L+\alpha
(0)}\left( \gamma \right) >0.$ Moreover, it follows by inequality \ref{eq6}
which the action $L+\alpha (0)$ of any curve is nonnegative, so $\Phi
_{L+\alpha (0)}\left( x,y\right) \geq 0$ for all $x,y\in \mathbb{T}^{2}.$
Then%
\begin{equation*}
\mathbb{A}_{L+\alpha (0)}\left( \gamma \right) =\Phi _{L+\alpha (0)}\left(
\gamma \left( 0\right) ,\gamma \left( T\right) \right) =-\Phi _{L+\alpha
(0)}\left( \gamma \left( 0\right) ,\gamma \left( T\right) \right) \leq 0.
\end{equation*}%
This is a contradiction.

If $\Psi $ is not injective there exists $b\in \Gamma _{\min },b\neq a$ such
that $\left( b,0\right) \in \left[ \left( a,0\right) \right] .$ Since each
static class is connected (See G. Contreras and G. Paternain in \cite{gon1},
Proposition 3.4) and $b\in \pi \left( \left[ \left( a,0\right) \right]
\right) $ we have that $\pi \left( \left[ \left( a,0\right) \right] \right)
\subset
\mathbb{R}
/%
\mathbb{Z}
$ is connected so it is an interval. By total disconnectedness of $\Gamma
_{\min }$, there exists $q\in \pi \left( \left[ \left( a,0\right) \right]
\right) -\Gamma _{\min }.$ The contradiction follows of the inequality \ref%
{eq6} by same argument above.

\bigskip\

\addcontentsline{toc}{chapter}{Bibliografia}

\end{document}